\title{Average variance bounds for integer points on the sphere}
\author{Christopher Lutsko}
\address{Department of Mathematics, University of Houston, 3551 Cullen Blvd, 77204, Houston, Texas, United States.}
\email{clutsko@uh.edu}
\begin{document}

\keywords{Lattice points, $L$-series, Fourier coefficients of automorphic forms}

\begin{abstract}
Let $\widehat{\mathcal E}(n)$ denote the set of integer points on the sphere $|\vect{x}|^2=n$, projected radially onto the unit sphere. Under the usual congruence conditions on $n$, Duke proved that these points become equidistributed as $n\to\infty$. To study their finer-scale distribution, we consider the variance of the number of projected lattice points contained in a spherical cap. Bourgain, Rudnick, and Sarnak conjectured an asymptotic formula for this variance. We prove an unconditional upper bound of the conjectured order of magnitude after averaging over the squared radius $n$, and we obtain a corresponding estimate for averages over sufficiently long intervals.
\end{abstract}

\subjclass[2010]{Primary 11E16, 11F30}

\maketitle

\section{Introduction}

Let
\begin{align*}
  \cN := \{n\in\N : n\not\equiv 0,4,7\Mod{8}\},
\end{align*}
and, for $n\in\cN$, define
\begin{align*}
  \cE(n) := \{\vect{x}\in\Z^3 : |\vect{x}|^2=n\}.
\end{align*}
We write
\begin{align*}
  \wh{\cE}(n) := \frac{1}{\sqrt n}\cE(n)\subset\S^2
\end{align*}
for the radial projection of these lattice points onto the unit sphere. Assuming the generalized Riemann hypothesis, Linnik~\cite{Linnik1968} proved that the sets $\wh{\cE}(n)$ become equidistributed on $\S^2$. Duke~\cite{Duke1988} and, independently, Golubeva and Fomenko~\cite{GolubevaFomenko1990} subsequently established this result unconditionally, building on the breakthrough work of Iwaniec~\cite{Iwaniec1987}.

Duke's theorem naturally raises the question of whether the fine-scale statistics of $\wh{\cE}(n)$ agree with those of independent uniformly distributed random points on the sphere. Bourgain, Rudnick, and Sarnak~\cite{BRS} initiated a systematic study of this question. One of the statistics they considered is the variance of the number of points in a spherical cap. Given a measurable set $\Omega\subset\S^2$, set
\begin{align*}
  Z(n,\Omega) := \#\bigl(\wh{\cE}(n)\cap\Omega\bigr)
\end{align*}
and $N_n:=\#\cE(n)$. The corresponding variance is
\begin{align*}
  \Var(\Omega,n)
  := \int_{\S^2}\abs{Z(n,\Omega+\zeta)-N_n\sigma(\Omega)}^2\,\rd\sigma(\zeta),
\end{align*}
where $\sigma$ denotes the normalized surface area measure on $\S^2$. Bourgain, Rudnick, and Sarnak formulated the following conjecture.

\begin{conjecture}[{\cite[Conjecture 1.6]{BRS}}]\label{con}
Let $\Omega_n$ be a sequence of spherical caps. If
\begin{align*}
  N_n^{-1+\vep}\ll\sigma(\Omega_n)\ll N_n^{-\vep}
\end{align*}
as $n\to\infty$ through $n\in\cN$, then
\begin{align}
  \Var(\Omega_n,n)\sim N_n\sigma(\Omega_n).
\end{align}
\end{conjecture}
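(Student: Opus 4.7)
The plan is to reduce $\Var(\Omega_n, n)$ to a spectral sum of Weyl-type sums over $\wh{\cE}(n)$, identify those sums with central values of automorphic $L$-functions via Waldspurger's formula, and extract the main term $N_n \sigma(\Omega_n)$ by a Petersson/Kuznetsov trace argument together with a precise treatment of the off-diagonal.

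First, expand the indicator of $\Omega_n$ in spherical harmonics, $\mathbf{1}_{\Omega_n}(x) = \sum_{\ell \geq 0} \sum_{|m| \leq \ell} \widehat{h}_{\ell, m}(\Omega_n) Y_\ell^m(x)$. Because $\Omega_n$ is a spherical cap, its Fourier coefficients (after a suitable rotation) depend only on $\ell$ and satisfy $|\widehat{h}_\ell(\Omega_n)|^2 \asymp \sigma(\Omega_n)/\ell^2$ in the dominant range $1 \leq \ell \lesssim \sigma(\Omega_n)^{-1/2}$, with rapid decay thereafter. Integrating the squared defect over the rotation parameter $\zeta \in \S^2$, the rotational action on the $Y_\ell^m$ together with Parseval yields
\begin{align*}
\Var(\Omega_n, n) = \sum_{\ell \geq 1} |\widehat{h}_\ell(\Omega_n)|^2\, T_\ell(n), \qquad T_\ell(n) := \sum_{|m|\leq \ell}\Bigl|\sum_{y \in \wh{\cE}(n)} Y_\ell^m(y)\Bigr|^2.
\end{align*}

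Next, identify $T_\ell(n)$ arithmetically. The inner Weyl sums are the $n$-th Fourier coefficients of a theta series attached to a harmonic polynomial of degree $\ell$ in three variables, hence of a half-integral weight modular form of weight $\ell + 3/2$ on $\Gamma_0(4)$. By the Shimura lift and Waldspurger's formula, $T_\ell(n)$ decomposes as a weighted sum, over a basis of Hecke eigenforms $F$ of weight $2\ell + 2$ for $\mathrm{SL}_2(\Z)$, of twisted central values $L(1/2, F \otimes \chi_n)$, up to explicit archimedean factors and local constants at primes dividing $4n$. The case $\ell = 0$ recovers $T_0(n) = N_n$; to obtain the conjecture it suffices to show $T_\ell(n) = N_n(1 + o(1))$ uniformly for $\ell$ in the dominant range. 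Applying a Petersson/Kuznetsov trace formula to the spectral expression, the diagonal contribution reproduces $N_n$ independently of $\ell$; summing this against the cap weights via Parseval gives precisely
\begin{align*}
\sum_{\ell \geq 1} |\widehat{h}_\ell(\Omega_n)|^2 \cdot N_n = N_n \sigma(\Omega_n)(1 + o(1)).
\end{align*}

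The decisive step, and the \emph{main obstacle}, is controlling the off-diagonal Kloosterman/Sali\'e contribution, weighted by $|\widehat{h}_\ell(\Omega_n)|^2$ and summed over $\ell$ in the dominant range. For an individual $n \in \cN$ this amounts to an asymptotic evaluation — essentially a Lindel\"of-on-average statement, uniform in the weight and the conductor $n$ simultaneously — for the family $\{L(1/2, F \otimes \chi_n)\}_F$ with $F$ of weight up to $\sigma(\Omega_n)^{-1/2}$, which in the regime $\sigma(\Omega_n) \asymp N_n^{-1+\varepsilon}$ pushes to weight nearly $N_n^{1/2 - \varepsilon}$. No current technology delivers such uniform subconvexity individually in $n$; this is precisely the obstacle that forces the paper (and all prior work on this problem) to average over $n$, since averaging collapses the off-diagonal via a second application of Petersson/Kuznetsov in the $n$-variable, reducing matters to sums of Kloosterman/Sali\'e sums that can be controlled unconditionally. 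Proving Conjecture~\ref{con} as stated therefore appears to require a substantial breakthrough in the spectral theory of half-integral weight forms.
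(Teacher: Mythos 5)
This statement is Conjecture 1.6 of Bourgain--Rudnick--Sarnak, quoted in the paper as a conjecture: the paper does not prove it, and explicitly says a full proof ``appears to be out of reach.'' What the paper actually proves (Theorems \ref{thm:main} and \ref{thm:smooth}) is an unconditional \emph{upper bound} of the conjectured order of magnitude for the variance \emph{averaged over $n$}, obtained by the same general pipeline you describe in your first two steps: spherical-harmonic decomposition of the variance into Weyl sums, identification of the Weyl sums with Fourier coefficients of weight-$(\ell+3/2)$ theta series on $\Gamma_0(4)$ (Duke's route, rather than the Waldspurger/central-value route of BRS), and then control of averages of $|a_{k,j}(n)|^2$ via Petersson-type formulas and a Rankin--Selberg $L$-series, uniformly in the weight. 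So your roadmap is a faithful description of the landscape, but it is not a proof of the statement.

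The genuine gap is the one you yourself flag: the assertion that $T_\ell(n)=N_n(1+o(1))$ uniformly for $\ell$ up to about $\sigma(\Omega_n)^{-1/2}$ for an \emph{individual} $n\in\cN$ is precisely the content of the conjecture, and it does not follow from a Petersson/Kuznetsov diagonal extraction: for fixed $n$ there is no average over the relevant family, so the ``off-diagonal'' is not a lower-order term one can discard but carries the full difficulty. Via Waldspurger this uniformity amounts to an asymptotic (not merely subconvexity, and in the regime $\sigma(\Omega_n)\asymp N_n^{-\vep}$ it is equivalent to a Lindel\"of-type statement, as Humphries--Radziwi\l\l\ observe) for the family $L(1/2,F\otimes\chi_n)$ uniformly in weight and conductor, which is unavailable; this is exactly why BRS only get a conditional upper bound and why the present paper averages over $n$ --- and even after averaging, the paper obtains only an upper bound of the right size, not the asymptotic, so your remark that averaging in $n$ ``collapses the off-diagonal'' to yield the asymptotic overstates what is currently achievable. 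Two smaller points: the cap coefficients satisfy $|\widehat{h}_\ell(\Omega_n)|^2\asymp\sigma(\Omega_n)/\ell^2$ only on average in $\ell$ (they oscillate through zeros of Legendre polynomials), and the Waldspurger proportionality constants at the ramified places and in the archimedean aspect must be tracked uniformly in $\ell$, which is itself delicate. In short: your write-up is an accurate survey of why the conjecture is open, but it cannot be accepted as a proof, and the paper contains no proof to compare it against --- only the averaged upper bound.
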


Here a spherical cap is a set of the form
\begin{align*}
  \Omega_n=\{\vect{x}\in\S^2 : d(\vect{x},\vect{y})\le r_n\},
\end{align*}
where $d$ denotes spherical distance. Bourgain, Rudnick, and Sarnak also stated an analogous conjecture for annuli. Although the full conjecture remains out of reach, they proved, conditionally on the Lindel\"of hypothesis for $\GL(2)/\Q$ $L$-functions, that
\begin{align}
  \Var(\Omega_n,n)\ll n^\vep N_n\sigma(\Omega_n),
  \qquad \vep>0,
\end{align}
provided that $n$ is square-free; see~\cite[Theorem 1.7]{BRS}.

Humphries and Radziwi\l\l~\cite{HumphriesRadziwill2022} subsequently proved the Bourgain--Rudnick--Sarnak conjecture for square-free $n$ in the microscopic regime, when $\Omega_n$ is an annulus with large inner radius. The regime $\sigma(\Omega_n)\asymp n^{-\vep}$ remains open and, as they observed, is equivalent to the Lindel\"of hypothesis for a family of $L$-functions. Shubin~\cite{Shubin2023} later obtained a related result under GRH for $\GL(2)/\Q$. Related equidistribution problems have also been studied by Lubotzky, Phillips, and Sarnak~\cite{LubotzkyPhillipsSarnak1986}, Ellenberg, Michel, and Venkatesh~\cite{EMV}, and many others.

The purpose of this paper is to make unconditional progress toward Conjecture~\ref{con}, without imposing a square-free condition on $n$, by averaging over the levels. Fix a spherical cap $\Omega_X\subset\S^2$ and define
\begin{align*}
  \cA_{X,H}
  := \frac{1}{H}\sum_{\substack{n\in\cN\\X\le n\le X+H}}
  \Var(\Omega_X,n).
\end{align*}
For the initial interval, we write
\begin{align*}
  \cA_X
  :=\frac{1}{X}\sum_{\substack{n\in\cN\\1\le n\le X}}
  \Var(\Omega_X,n).
\end{align*}
Our main result is the following.

\begin{theorem}\label{thm:main}
Fix an integer $X\in\cN$ and a spherical cap $\Omega_X$ satisfying
\begin{align*}
  \sigma(\Omega_X)=cN_X^\delta,
\end{align*}
where $-1<\delta<0$ and $c>0$ is constant. Then
\begin{align}\label{A bound fin}
  \cA_X\ll X^{1/2}\sigma(\Omega_X).
\end{align}
Moreover, if
\begin{align*}
  H>\frac{X^{3/4}}{\sigma(\Omega_X)^{3/4}},
\end{align*}
then
\begin{align}\label{AH bound fin}
  \cA_{X,H}\ll X^{1/2}\sigma(\Omega_X).
\end{align}
\end{theorem}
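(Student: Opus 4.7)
My plan is to open up the variance spectrally on the sphere, reducing the problem to estimating ``Weyl sums'' at fixed spherical harmonic frequency $\ell$, and then to control those Weyl sums on average in $n$ by realising them as Fourier coefficients of half-integral weight theta series.

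By the addition formula on $\S^2$ and Parseval, one has
\begin{align*}
    \Var(\Omega_X,n) \;=\; \sum_{\ell\ge 1} \beta_\ell \, M_\ell(n), \qquad M_\ell(n) := \sum_{m=-\ell}^{\ell}\abs{W_\ell^m(n)}^2,
\end{align*}
where $W_\ell^m(n) := \sum_{\vect{x}\in\wh{\cE}(n)} Y_\ell^m(\vect{x})$ and $\beta_\ell$ is (up to normalisation) the squared $\ell$-th Legendre coefficient of $\mathbf{1}_{\Omega_X}$. Standard asymptotics for $P_\ell(\cos r)$, with $r$ the angular radius of $\Omega_X$, give rapid decay of $\beta_\ell$ past the cutoff $L:=\sigma(\Omega_X)^{-1/2}$ together with the Plancherel identity $\sum_{\ell\ge 1}(2\ell+1)\beta_\ell \asymp \sigma(\Omega_X)$. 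The $\ell=0$ mode, removed from the spectral sum, is exactly the mean $N_n\sigma(\Omega_X)$.

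For each $(\ell,m)$ with $\ell\ge 1$, the rescaled Weyl sum $n^{\ell/2}W_\ell^m(n)$ equals the $n$-th Fourier coefficient of the weight $\ell+\tfrac32$ holomorphic cusp form
\begin{align*}
    \theta_{\ell,m}(z) := \sum_{\vect{x}\in\Z^3}\widetilde Y_\ell^m(\vect{x})\, e^{2\pi i\abs{\vect{x}}^2 z}
\end{align*}
on $\Gamma_0(4)$, with $\widetilde Y_\ell^m$ the harmonic homogeneous polynomial of degree $\ell$ underlying $Y_\ell^m$. The Rankin--Selberg mean value for $\theta_{\ell,m}$ yields $\sum_{n\le X}\abs{n^{\ell/2}W_\ell^m(n)}^2 \ll_\vep (2\ell+1)^{A} X^{\ell+3/2+\vep}$, and Abel summation against the factor $n^{-\ell}$ then gives $\sum_{n\le X}M_\ell(n)\ll_\vep (2\ell+1)^{A+1} X^{3/2+\vep}$. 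Substituting into the spectral decomposition and summing $\ell$ up to $L$ against the $\beta_\ell$-bound produces $\sum_{n\le X}\Var(\Omega_X,n)\ll X^{3/2+\vep}\sigma(\Omega_X)$, and hence \eqref{A bound fin}.

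For the short-interval statement \eqref{AH bound fin}, the long-sum Rankin--Selberg asymptotic alone is too blunt: its error dominates the expected main term $HX^{1/2}$ once $H$ falls below a power of $X$. I would instead smooth $M_\ell(n)$ against a bump of mass $H$ centred at $X$ and analyse it via an approximate functional equation for the Rankin--Selberg convolution $L(s,\theta_{\ell,m}\otimes\overline{\theta_{\ell,m}})$, whose analytic conductor grows polynomially in $\ell$. The diagonal reproduces the expected size $HX^{1/2}\sigma(\Omega_X)$, while the off-diagonal is acceptable exactly when $H$ exceeds a threshold obtained by balancing the length of the dual sum against the spectral cutoff $L\asymp\sigma(\Omega_X)^{-1/2}$; tracking the exponents leads to the stated $H>X^{3/4}\sigma(\Omega_X)^{-3/4}$.

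The principal difficulty I anticipate is maintaining effective uniformity in the weight $\ell+\tfrac32$ over the range $1\le\ell\le L$. Classical Rankin--Selberg and Voronoi statements for half-integer weight forms usually absorb polynomial factors of $\ell$ into the implied constants, but here $\ell$ ranges up to a power of $\sigma(\Omega_X)$, so the $\ell$-dependence must be tracked explicitly. A secondary obstacle is the half-integer weight multiplier system at level $4$, together with potential oldform content of $\theta_{\ell,m}$, which slightly complicates the functional equation driving the short-interval estimate.
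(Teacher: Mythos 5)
Your skeleton is the same as the paper's (spectral expansion of the variance into Weyl sums, identification of $n^{\ell/2}W_\ell^m(n)$ with the $n$-th coefficient of the weight $\ell+\tfrac32$ theta series on $\Gamma_0(4)$, and a Rankin--Selberg mean value that must be uniform in the weight), but there is a genuine gap at the step ``summing $\ell$ up to $L$ against the $\beta_\ell$-bound.'' For the \emph{sharp} cap the Fourier--Legendre coefficients do not decay rapidly beyond $\ell\asymp\sigma(\Omega_X)^{-1/2}$: the envelope is only polynomial, $\beta_\ell\ll \ell^{-3}\sigma(\Omega_X)^{1/2}$ (this is the Lubotzky--Phillips--Sarnak bound $k^{-3/2}\sigma(\Omega_X)^{1/4}$, squared). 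On the other hand, any mean value for $\sum_{n\le X}M_\ell(n)$ that is uniform in the weight carries a polynomially growing error in $\ell$; with the inputs actually available (a Petersson/Iwaniec-type coefficient bound, the evaluation $\sum_m\|\theta_{\ell,m}\|_2^2\ll\Gamma(k)(4\pi)^{-k}k$, and a critical-line bound of size $k^{1/2+\vep}$) the error in $\sum_m\sum_{n\le X}\abs{W_\ell^m(n)}^2$ is of size $X^{1+\vep}\ell^{5/2+\vep}$. Pairing this with $\beta_\ell\ll\ell^{-3}\sigma(\Omega_X)^{1/2}$ gives an $\ell$-sum behaving like $\sigma(\Omega_X)^{1/2}\sum_\ell \ell^{-1/2+\vep}$, which diverges: the sharp cap supplies no effective truncation at $L=\sigma(\Omega_X)^{-1/2}$, so the error terms swamp the main term. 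This is exactly why the paper does not work with the indicator directly: it first convolves with a kernel at scale $\rho$, which upgrades the coefficient bound to $h_X(k)\ll k^{-3/2}\sigma(\Omega_X)^{1/4}\min\bigl(1,(\sin\rho)^{1/2}k^{-3/2}(1-\cos\rho)^{-1}\bigr)$ and hence gives a genuine cutoff at $k\sim1/\rho$ at the cost of a $\rho^{-1/2}$ loss; it proves the bound for the smoothed variance, and only then removes the smoothing by comparing $Z$ and $Z_\rho$ and optimizing $\rho\approx X^{-1/2-\vep}\sigma(\Omega_X)^{-1/2}$. Your sketch has no substitute for this smoothing-and-optimization step, and without it the claimed conclusion does not follow; note also that even the main term of your argument only yields $\cA_X\ll X^{1/2+\vep}\sigma(\Omega_X)$, whereas the stated bound has no $\vep$, which in the paper comes from isolating the main term as a clean residue (equivalently the $n=1$ Weyl-sum contribution) and confining all $\vep$-losses to terms that are strictly dominated after the choice of $\rho$.

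A second, related point: the unspecified exponent $A$ in your Rankin--Selberg input is not a detail to be absorbed later --- it is the substance of the theorem, as you yourself flag under ``uniformity in the weight.'' To make it explicit one needs the three uniform ingredients above (the paper's Lemmas on the Petersson-type bound, on $\sum_j\|\theta_{k,j}\|_2^2$ via unfolding against the Eisenstein series together with the addition formula $\sum_j\abs{\phi_{k,j}(z)}^2=\tfrac{2(k-1)}{2\pi}$ and $\#\wh{\cE}(1)=6$, and the approximate functional equation on the critical line), and these produce precisely the $\ell^{5/2+\vep}$ error that forces the smoothing. Your short-interval plan (bump of width $H$, Mellin/approximate functional equation, residue as diagonal) is in the same spirit as the paper's treatment of $\cA_{X,H}$ and the threshold $H>X^{3/4}\sigma(\Omega_X)^{-3/4}$ does come out of that balance, but it too only works downstream of the smoothing and of the uniform bounds just described.
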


Since
\begin{align*}
  X^{1/2-\vep}\ll N_X\ll X^{1/2+\vep},
\end{align*}
Theorem~\ref{thm:main} is consistent with the order of magnitude predicted by Conjecture~\ref{con}.

\subsection{Strategy of the proof}

We first smooth the counting function, introducing a parameter that gives additional control over the weight aspect. A spherical-harmonic expansion then expresses the smoothed variance in terms of Weyl sums. Following Duke's proof of Linnik's conjecture, we identify these Weyl sums with normalized Fourier coefficients of half-integral-weight theta series. The averaged variance is thereby reduced to an average of squares of Fourier coefficients of holomorphic cusp forms. Although such averages are classically accessible, we require estimates that are uniform in both the weight and the level. We obtain the needed uniformity by analyzing a Rankin--Selberg-type $L$-series.

\subsection{Smoothing}

Rather than work directly with the discrete count $Z(n,\Omega)$, we introduce a smoothing parameter $\rho>0$, to be optimized at the end of the argument. For $z,\zeta\in\S^2$ and $\Omega\subset\S^2$, let
\begin{align*}
  \chi_\Omega(z,\zeta)
  :=
  \begin{cases}
    1,& z\in\Omega+\zeta,\\
    0,& z\notin\Omega+\zeta,
  \end{cases}
\end{align*}
be the indicator function of $\Omega+\zeta$. We also define
\begin{align*}
  k_\rho(z,\zeta)
  :=
  \begin{cases}
    \dfrac{1}{2\pi(1-\cos\rho)},& d(z,\zeta)<\rho,\\
    0,& d(z,\zeta)\ge\rho.
  \end{cases}
\end{align*}
Convolving $k_\rho$ with $\chi_\Omega$ gives
\begin{align*}
  k_\rho(\Omega,\zeta,z)
  := (\chi_\Omega(\cdot,\zeta)*k_\rho)(z)
  = \int_{\S^2}k_\rho(z,\xi)\chi_\Omega(\xi,\zeta)\,\rd\sigma(\xi).
\end{align*}
The smoothed counting function is
\begin{align*}
  Z_\rho(n,\Omega+\zeta)
  := \sum_{\vect{x}\in\wh{\cE}(n)}k_\rho(\Omega,\zeta,\vect{x}),
\end{align*}
and the corresponding smoothed variance is
\begin{align*}
  \Var_\rho(\Omega,n)
  := \int_{\S^2}\abs{Z_\rho(n,\Omega+\zeta)-N_n\sigma(\Omega)}^2\,\rd\sigma(\zeta).
\end{align*}

For a spherical cap $\Omega_X$, define
\begin{align*}
  A_{X,\rho}
  := \frac{1}{X}\sum_{\substack{n\in\cN\\1\le n\le X}}
  \Var_\rho(\Omega_X,n)
\end{align*}
and
\begin{align*}
  A_{X,H,\rho}
  := \frac{1}{H}\sum_{\substack{n\in\cN\\X\le n\le X+H}}
  \Var_\rho(\Omega_X,n).
\end{align*}
The following smoothed estimate is the principal technical result of the paper.

\begin{theorem}\label{thm:smooth}
Fix an integer $X\in\cN$, a smoothing parameter $\rho>0$, and a spherical cap $\Omega_X$ satisfying
\begin{align*}
  \sigma(\Omega_X)=cN_X^\delta,
\end{align*}
where $-1<\delta<0$ and $c>0$ is constant. Then, for every $\vep>0$,
\begin{align}\label{A bound smooth}
  A_{X,\rho}
  \ll X^{1/2}\sigma(\Omega_X)
  +X^{1/2}\rho\sigma(\Omega_X)^{1/2}
  +X^\vep\sigma(\Omega_X)^{1/2}\rho^{-1/2-\vep}.
\end{align}
Moreover,
\begin{align}\label{AH bound}
  A_{X,H,\rho}
  \ll X^{1/2}\sigma(\Omega_X)
  +X^{1/2}\rho\sigma(\Omega_X)^{1/2}
  +\frac{X^{1+\vep}}{H}\sigma(\Omega_X)^{1/2}\rho^{-1/2-\vep}.
\end{align}
\end{theorem}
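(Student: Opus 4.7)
My approach begins with a Funk-Hecke spectral decomposition. Because $k_\rho(\Omega_X,\zeta,z)$ depends on $(z,\zeta)$ only through $z\cdot\zeta$, it expands as $\sum_{\ell\ge 0}\wh c_\ell\sum_m Y_{\ell,m}(z)\overline{Y_{\ell,m}(\zeta)}$, with $\wh c_\ell=\wh c_\ell(\Omega_X,\rho)=\wh\chi_{\Omega_X}(\ell)\,\wh k_\rho(\ell)$. The $\ell=0$ term equals $N_n\sigma(\Omega_X)$, so it cancels in the variance; Parseval in $\zeta$ then yields
\[
\Var_\rho(\Omega_X,n)=\sum_{\ell\ge 1}|\wh c_\ell|^2\,W_\ell(n),\qquad W_\ell(n):=\sum_{m=-\ell}^\ell\Bigl|\sum_{\vect x\in\wh\cE(n)}Y_{\ell,m}(\vect x)\Bigr|^2.
\]
Standard stationary-phase estimates on the Legendre coefficients of a cap give $|\wh\chi_{\Omega_X}(\ell)|^2\ll\min(\sigma(\Omega_X)^2,\,\sigma(\Omega_X)^{1/2}\ell^{-3})$ and $|\wh k_\rho(\ell)|^2\ll\min(1,(\ell\rho)^{-3})$, the second factor encoding the gain from smoothing.

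Next, for each $\ell\ge 1$ and each $m\in\{-\ell,\dots,\ell\}$, the theta series $\Theta_{\ell,m}(z):=\sum_n n^{\ell/2}W_{\ell,m}(n)\,e^{2\pi inz}$ is, by the classical Eichler-Shimura theta lift, a holomorphic cusp form of half-integer weight $\ell+\tfrac32$ on $\Gamma_0(4)$ with the appropriate character. Summing $|\cdot|^2$ over $m$, the Dirichlet series $L_\ell(s):=\sum_{n\in\cN}W_\ell(n)\,n^{-s}$ is a linear combination of Rankin-Selberg $L$-functions $L(s+\ell,\Theta_{\ell,m}\otimes\overline{\Theta_{\ell,m}})$; it continues meromorphically to $\mathbb C$ with a simple pole at $s=3/2$ whose residue is proportional to the Petersson norm $\|\Theta_\ell\|^2$. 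Perron's formula followed by shifting the contour from $\Re s=3/2+\delta$ down past the pole to $\Re s=1+\vep$ then gives
\[
\overline W_\ell:=\frac1X\sum_{\substack{n\le X\\ n\in\cN}}W_\ell(n)\ll X^{1/2}\,P(\ell)+X^{\vep}\,Q(\ell),
\]
where $P(\ell)$ comes from the residue and $Q(\ell)$ from a convexity bound on $L_\ell$ along the shifted line, and the crucial requirement is that both $P,Q$ be polynomial in $\ell$. For the short-interval average $A_{X,H,\rho}$ the same contour argument applies with $[X,X+H]$ in place of $[1,X]$; the main term is unchanged, but the contour error picks up an additional factor $X/H$ from the length of the range.

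Substituting into $A_{X,\rho}=\sum_{\ell\ge 1}|\wh c_\ell|^2\,\overline W_\ell$ and splitting the $\ell$-sum dyadically at $\ell\sim\sigma(\Omega_X)^{-1/2}$ and $\ell\sim\rho^{-1}$ produces the three terms in the theorem: the low-$\ell$ and mid-$\ell$ blocks each reproduce the expected main contribution $X^{1/2}\sigma(\Omega_X)$; the tail $\ell>\rho^{-1}$ contributes $X^{1/2}\rho\,\sigma(\Omega_X)^{1/2}$ through the smoothing decay of $\wh k_\rho$; and the Perron error, summed against $|\wh c_\ell|^2$, yields the remaining $X^\vep\sigma(\Omega_X)^{1/2}\rho^{-1/2+\vep}$ term (respectively, the $X^{1+\vep}H^{-1}\sigma(\Omega_X)^{1/2}\rho^{-1/2-\vep}$ term in the short-interval case). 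The hypothesis $H>X^{3/4}\sigma(X)^{-3/4}$ is precisely the threshold at which this last contribution is absorbed into the first two.

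The main obstacle, by far, is the uniform-in-$\ell$ Rankin-Selberg estimate: both $\|\Theta_\ell\|^2$ and the values of $L_\ell$ on the critical line must be bounded polynomially in $\ell$. Naive functional-equation arguments lose factorials from the archimedean Gamma factors, so the plan is to unfold $L_\ell$ against a carefully chosen Eisenstein series and exploit the explicit theta-series description of $\Theta_\ell$ to cancel those losses. This Rankin-Selberg analysis is the technical backbone of the argument and is where I expect almost all of the work to go.
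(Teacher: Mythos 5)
Your outline follows essentially the same route as the paper: expand the smoothed variance over spherical harmonics and use Parseval so that only the Weyl sums $W_{\ell,m}(n)$ enter, lift these to Fourier coefficients of half-integer weight theta series on $\Gamma_0(4)$, package their squares into a Rankin--Selberg Dirichlet series, apply Perron/Mellin with a contour shift so that the pole gives the main term (proportional to Petersson norms) and the shifted line gives the error, and finally sum over $\ell$ against the decay of the cap and smoothing coefficients. That is precisely the paper's architecture, so the strategy is sound.

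The genuine gap is that the content of the theorem lives entirely in the estimates you defer, and your stated sufficient condition --- that the residue term $P(\ell)$ and the shifted-line bound $Q(\ell)$ be ``polynomial in $\ell$'' --- is not actually sufficient. The paper has to prove three specific uniform bounds: (i) a Petersson-formula bound on $\abs{a_{k,j}(n)}^2/\|\theta_{k,j}\|_2^2$ uniform in both $n$ and the weight $k$ (with separate regimes $k\lessgtr 2\pi e n$ and a splitting of the Kloosterman/Bessel sum at $c\approx n/k$); (ii) an evaluation of $\|\theta_{k,j}\|_2^2$ by unfolding against an Eisenstein series, showing the normalized norm is $\ll \abs{W_{k,j}(1)}^2+2^{-k}$, so that summed over $j$ it is $\ll k$ and the residue contribution is essentially $\Var_\rho(\Omega_X,1)\ll\sigma(\Omega_X)+\rho\,\sigma(\Omega_X)^{1/2}$; and (iii) an approximate functional equation giving $\abs{L_{k,j}(\tfrac12+it)}\ll (4\pi)^k\Gamma(k-1)^{-1}X^\vep k^{1/2+\vep}$. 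The precise exponents here are what make the theorem true: if the normalized residue grew like $\ell^2$ rather than $\ell$, the main term would already be of size $X^{1/2}\sigma(\Omega_X)^{1/2}$, and if $Q(\ell)$ grew like $\ell^{4}$ rather than the paper's $\ell^{5/2+\vep}$, the error would be of size $X^\vep\sigma(\Omega_X)^{1/2}\rho^{-2}$, both fatal after the choice $\rho\asymp X^{-1/2}\sigma(\Omega_X)^{-1/2}$ needed to unsmooth. So merely asserting polynomial growth, and postponing the Rankin--Selberg analysis to future work, leaves the heart of the proof missing; the plan you sketch for closing it (unfolding against an Eisenstein series and exploiting the explicit theta description) is indeed how the paper does it, but as written your argument does not yet establish either displayed bound.
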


We now deduce Theorem~\ref{thm:main} from Theorem~\ref{thm:smooth}.

\begin{proof}[Proof of Theorem~\ref{thm:main}]
It remains to compare the original variance with its smoothed counterpart. We have
\begin{align*}
  \Var(\Omega,n)
  &= \int_{\S^2}\abs{Z(n,\Omega+\zeta)-N_n\sigma(\Omega)}^2\,\rd\sigma(\zeta)\\
  &\ll \Var_\rho(\Omega,n)
  +\int_{\S^2}\abs{Z(n,\Omega+\zeta)-Z_\rho(n,\Omega+\zeta)}^2\,\rd\sigma(\zeta).
\end{align*}
The second term is bounded by
\begin{align*}
  &\int_{\S^2}\abs{Z(n,\Omega+\zeta)-Z_\rho(n,\Omega+\zeta)}^2\,\rd\sigma(\zeta)\\
  &\quad\ll N_n\sigma(\Omega)
  \sum_{\vect{x}\in\wh{\cE}(n)}
  \int_{\S^2}\int_{\S^2}
  k_\rho(\vect{x},\xi)
  \abs{\chi_\Omega(\vect{x},\zeta)-\chi_\Omega(\xi,\zeta)}
  \,\rd\sigma(\xi)\,\rd\sigma(\zeta)\\
  &\quad\ll \frac{N_n\sigma(\Omega)}{\rho^2}
  \sum_{\vect{x}\in\wh{\cE}(n)}
  \int_{\S^2}\int_{d(\vect{x},\xi)<\rho}
  \abs{\chi_\Omega(\vect{x},\zeta)-\chi_\Omega(\xi,\zeta)}
  \,\rd\sigma(\xi)\,\rd\sigma(\zeta)\\
  &\quad\ll N_n^2\sigma(\Omega)^{3/2}\rho.
\end{align*}
Thus, with the choice
\begin{align*}
  \rho=\frac{1}{X^{1/2+\vep}\sigma(\Omega_X)^{1/2}},
\end{align*}
Theorem~\ref{thm:smooth} gives
\begin{align*}
  \cA_X
  &\ll A_{X,\rho}+X^{1/2-\vep}\sigma(\Omega_X)\\
  &\ll X^{1/2}\sigma(\Omega_X)
  +X^{-\vep}
  +X^{1/4+\vep}\sigma(\Omega_X)^{3/4}.
\end{align*}
The final term has the required order provided that
\begin{align*}
  \sigma(\Omega_X)>X^{-1+\vep},
\end{align*}
which holds in the stated range. The proof of~\eqref{AH bound fin} is identical, using the second estimate in Theorem~\ref{thm:smooth} and the same choice of $\rho$.
\end{proof}

\section{Reduction to Fourier coefficients of automorphic forms}

We begin with the smoothed variance
\begin{align*}
  \Var_\rho(\Omega_X,n)
  =\int_{\S^2}
  \left(Z_\rho(n;\Omega_X+\zeta)-N_n\sigma(\Omega_X)\right)^2
  \,\rd\sigma(\zeta).
\end{align*}
For $m=0,1,\ldots$, put $k=m+3/2$, and let
\begin{align*}
  \{\phi_{k,j}:1\le j\le 2(k-1)\}
\end{align*}
be an orthonormal basis of degree-$m$ spherical harmonics on $\S^2$. Define the associated Weyl sums by
\begin{align}\label{Weyl sum}
  W_{k,j}(n)
  :=\sum_{\vect{x}\in\wh{\cE}(n)}\phi_{k,j}(\vect{x}).
\end{align}
Since $k_\rho(\Omega_X,\zeta,z)$ depends only on $d(\zeta,z)$, we may regard it as a function $k_\rho(t)$ of one real variable. Set
\begin{align*}
  h_X(k)
  :=2\pi\int_0^1 k_\rho(t)P_{k-3/2}(t)\,\rd t,
\end{align*}
where $P_{k-3/2}$ is the Legendre polynomial of degree $k-3/2$. The spherical-harmonic expansion of the smoothed variance is then
\begin{align*}
  \Var_\rho(\Omega_X,n)
  =\sum_k h_X(k)^2\sum_{j=1}^{2(k-1)}\abs{W_{k,j}(n)}^2,
\end{align*}
where, throughout the paper, a sum over $k$ means a sum over $k=m+3/2$ with $m=1,2,\ldots$.

Bourgain, Rudnick, and Sarnak~\cite{BRS} express the Weyl sums in terms of special values of $L$-functions. Instead, we return to Duke's proof of Linnik's conjecture and express them through Fourier coefficients of half-integral-weight modular forms. Given a spherical harmonic $\phi_{k,j}$ of degree $m=k-3/2$, define
\begin{align*}
  \theta_{k,j}(z)
  :=\sum_{\ell\in\Z^3}\phi_{k,j}(\ell)e(z|\ell|^2).
\end{align*}
Then $\theta_{k,j}$ is a holomorphic cusp form of weight $k$ for $\Gamma_0(4)$. If $a_{k,j}(n)$ denotes its $n$th Fourier coefficient, then
\begin{align*}
  W_{k,j}(n)=\frac{a_{k,j}(n)}{n^{k/2-3/4}};
\end{align*}
see~\cite{Duke1988} for details.

Consequently,
\begin{align*}
  A_{X,\rho}
  =\frac{1}{X}\sum_k h_X(k)^2
  \sum_{j=1}^{2(k-1)}
  \sum_{\substack{n\in\cN\\1\le n\le X}}
  \frac{\abs{a_{k,j}(n)}^2}{n^{k-3/2}}.
\end{align*}
Since only an upper bound is required, we may complete the sum over $n$ and obtain
\begin{align*}
  A_{X,\rho}
  \ll\frac{1}{X}\sum_k h_X(k)^2
  \sum_{j=1}^{2(k-1)}
  \sum_{1\le n\le X}
  \frac{\abs{a_{k,j}(n)}^2}{n^{k-3/2}}.
\end{align*}

\subsection{Fourier coefficients of holomorphic cusp forms}

Write
\begin{align*}
  \theta_{k,j}(z)=\sum_{n=1}^\infty a_{k,j}(n)e(nz).
\end{align*}
A standard pointwise estimate gives
\begin{align}\label{trivial bound}
  a_{k,j}(n)\ll_k n^{k/2-1/4}\tau(n),
\end{align}
where $\tau$ is the divisor function. It is convenient to normalize the coefficients by setting
\begin{align*}
  b_{k,j}(n):=a_{k,j}(n)n^{-(k-1)/2}.
\end{align*}
Then
\begin{align*}
  A_{X,\rho}
  \ll\frac{1}{X}\sum_k h_X(k)^2
  \sum_{j=1}^{2(k-1)}\sum_{1\le n\le X}
  n^{1/2}\abs{b_{k,j}(n)}^2.
\end{align*}
By positivity,
\begin{align}\label{A bound}
  A_{X,\rho}
  \ll\frac{1}{X^{1/2}}\sum_k h_X(k)^2\cF_X(k),
\end{align}
where
\begin{align*}
  \cF_X(k)
  :=\sum_{j=1}^{2(k-1)}\sum_{1\le n\le X}\abs{b_{k,j}(n)}^2.
\end{align*}
Thus the problem reduces to obtaining estimates for $\cF_X(k)$ that are uniform in $k$.

\subsection{A uniform coefficient bound}

Before estimating $\cF_X(k)$, we require a coefficient bound that records the dependence on both $n$ and $k$, as well as the normalization of the underlying theta series. For a weight-$k$ cusp form $f$, write
\begin{align*}
  \|f\|_2
  :=\left(
  \int_{\Gamma_0(4)\bk\half}y^{k-2}\abs{f(z)}^2\,\rd x\,\rd y
  \right)^{1/2}.
\end{align*}

\begin{lemma}\label{lem:trivial bound}
For every $\vep>0$ and every $1\le j\le 2(k-1)$, the Fourier coefficients of $\theta_{k,j}$ satisfy the following estimates. If $k<2\pi e n$, then
\begin{align}\label{triv 1}
  \frac{\abs{a_{k,j}(n)}^2}{\|\theta_{k,j}\|_2^2}
  \ll
  \frac{(4\pi)^k n^{k-1/2+\vep}}{\Gamma(k-1)k^{1/2}}.
\end{align}
If $k>2\pi e n$, then
\begin{align}
  \frac{\abs{a_{k,j}(n)}^2}{\|\theta_{k,j}\|_2^2}
  \ll\frac{(4\pi n)^{k-1}}{\Gamma(k-1)}.
\end{align}
\end{lemma}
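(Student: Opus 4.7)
The plan is to represent the Fourier coefficient $a_{k,j}(n)$ as the Petersson inner product of $\theta_{k,j}$ with the $n$-th Poincar\'e series $P_n$ of weight $k$ for $\Gamma_0(4)$. Unfolding the definition of $P_n$ against $\theta_{k,j}$ yields the standard identity
\[
  \langle \theta_{k,j}, P_n \rangle = \frac{\Gamma(k-1)}{(4\pi n)^{k-1}} \, a_{k,j}(n),
\]
so that Cauchy--Schwarz gives
\[
  \frac{|a_{k,j}(n)|^2}{\|\theta_{k,j}\|_2^2} \le \frac{(4\pi n)^{2(k-1)}}{\Gamma(k-1)^2} \, \|P_n\|_2^2.
\]
The problem thus reduces to bounding $\|P_n\|_2^2$, which is independent of $j$, and this is exactly what the Petersson formula delivers.

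By Petersson's formula (in its half-integer weight incarnation for $\Gamma_0(4)$) one has
\[
  \|P_n\|_2^2 = \frac{\Gamma(k-1)}{(4\pi n)^{k-1}}\left(1 + R_{k,n}\right), \qquad R_{k,n} = c_k \sum_{c>0} \frac{K(n,n;c)}{c} J_{k-1}\left(\frac{4\pi n}{c}\right),
\]
where $K(n,n;c)$ is the relevant half-integer weight Kloosterman sum (summed over $c$ divisible by the level). Retaining only the diagonal contribution already gives $|a_{k,j}(n)|^2/\|\theta_{k,j}\|_2^2 \ll (4\pi n)^{k-1}/\Gamma(k-1)$, which is the second bound of the lemma. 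In the regime $k > 2\pi e n$ the Bessel argument $y = 4\pi n/c \le 4\pi n < 2k/e$ lies safely below the transition point $k-1$, so $J_{k-1}(y) \ll (y/2)^{k-1}/\Gamma(k)$ decays rapidly; combined with Weil's bound $|K(n,n;c)| \ll \tau(c)\sqrt{(n,c)\,c}$, the tail $R_{k,n}$ is absolutely convergent of size $O(1/\sqrt n)$, so the bound is unaffected.

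For the case $k < 2\pi e n$, the plan is to split the $c$-sum at the transition point $c_0 := 4\pi n/k$. In the oscillatory range $c \le c_0/2$ the uniform bound $J_{k-1}(y) \ll y^{-1/2}$ applies, and Weil's bound together with a standard divisor manipulation gives a contribution $\ll n^{1/2+\vep}/k^{1+\vep}$. In the narrow transition band $c \asymp c_0$ the improved estimate $J_{k-1} \ll k^{-1/3}$ gives a contribution of size $n^{1/2+\vep}/k^{5/6}$. For $c \ge 2c_0$ the rapid-decay bound $J_{k-1}(y) \ll (ey/(2k))^{k-1}/\sqrt k$ again yields $O(1/\sqrt n)$. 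Collecting the three ranges shows $R_{k,n} \ll n^{1/2+\vep}/k^{1/2}$, from which the first bound of the lemma follows.

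The main obstacle is a careful treatment of the transition region for the Bessel function, where the naive estimate $J_{k-1}(y) \ll y^{-1/2}$ is not sharp. One should appeal to a uniform Langer-type bound of the form $J_{k-1}(y) \ll k^{-1/4}(|y-k|+k^{1/3})^{-1/4}$ and verify that the interplay with Weil's bound, adapted to the half-integer weight Kloosterman sums (where in fact Salie-type identities offer additional structure), does not cost the required power of $k$. Precise identification of the half-integer weight Petersson formula for $\Gamma_0(4)$ and the matching bound for its Kloosterman sums is equally essential, but these ingredients are classical and can be inserted directly.
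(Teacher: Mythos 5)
Your proposal is correct and follows essentially the same route as the paper: your Cauchy--Schwarz step against the Poincar\'e series $P_n$ is equivalent to the paper's use of positivity in the Petersson/Iwaniec formula for an orthonormal basis containing $\theta_{k,j}/\|\theta_{k,j}\|_2$, and both arguments then combine the bound $|K(n,n;c)|\le (n,c)^{1/2}c^{1/2}\tau(c)$ with a splitting of the $c$-sum at the Bessel transition $c\asymp n/k$. The only difference lies in the Bessel estimates (the paper simply uses $|J_{k-1}|\le 1$ below the transition and the power-series bound above it, whereas you invoke the sharper $y^{-1/2}$, transition-range, and rapid-decay bounds), which if anything yields a slightly stronger power of $k$ than the lemma requires.
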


\begin{proof}
Normalize $\theta_{k,j}$ by its weight-$k$ $L^2$-norm. The form $\theta_{k,j}/\|\theta_{k,j}\|_2$ may be included in an orthonormal basis of $\cS_k$, the space of weight-$k$ cusp forms. Let $\{g_\ell\}_{\ell=1}^L$ be such a basis, with
\begin{align*}
  g_1=\theta_{k,j}/\|\theta_{k,j}\|_2.
\end{align*}
Petersson's formula for the Fourier coefficients of Poincar\'e series, expressed in terms of generalized Kloosterman sums, gives
\begin{align}
  \sum_{\ell=1}^L\abs{\wh{g_\ell}(n)}^2
  =\frac{(4\pi n)^{k-1}}{\Gamma(k-1)}
  \left(
    1+2\pi i^{-k}\sum_{c\equiv0\Mod{4}}c^{-1}
    J_{k-1}\left(\frac{4\pi n}{c}\right)K(n,n;c)
  \right);
\end{align}
see~\cite[Lemma 1]{Iwaniec1987}. Here $J_{k-1}$ is the Bessel function of order $k-1$, and
\begin{align*}
  K(a,b;c)
  :=\sum_{d\Mod{c}}\vep_d^{-2k}
  \left(\frac{c}{d}\right)
  e\left(\frac{a\overline d+bd}{c}\right)
\end{align*}
is the generalized Kloosterman sum, in the notation of~\cite[p.~389]{Iwaniec1987}. We use only the estimate
\begin{align*}
  \abs{K(n,n;c)}\le(n,c)^{1/2}c^{1/2}\tau(c).
\end{align*}

For $c<n/k$, we use the bound $J_{k-1}(z)\ll1$; for $c\ge n/k$, we use
\begin{align*}
  J_{k-1}(z)\le\frac{\abs{z/2}^{k-1}}{\Gamma(k)},
\end{align*}
as in~\cite[p.~14, (4)]{Bateman1953}. It follows that
\begin{align*}
  \sum_{\ell=1}^L\abs{\wh{g_\ell}(n)}^2
  &\ll\frac{(4\pi n)^{k-1}}{\Gamma(k-1)}
  \left(
    1+\sum_{c\le Y}c^{-1/2+\vep}
    +\sum_{c>Y}c^{-1/2+\vep}
      \frac{\abs{2\pi n/c}^{k-1}}{\Gamma(k)}
  \right)\\
  &\ll\frac{(4\pi n)^{k-1}}{\Gamma(k-1)}
  \left(
    1+Y^{1/2+\vep}
    +\frac{(2\pi n)^{k-1}}{\Gamma(k)k}Y^{3/2-k+\vep}
  \right),
\end{align*}
where $Y=n/k$. Stirling's formula now yields~\eqref{triv 1}.

If $k>2\pi e n$, the same argument gives
\begin{align*}
  \sum_{\ell=1}^L\abs{\wh{g_\ell}(n)}^2
  \ll
  \frac{(4\pi n)^{k-1}}{\Gamma(k-1)}
  \frac{(2\pi n)^{k-1}}{\Gamma(k)}
  +\frac{(4\pi n)^{k-1}}{\Gamma(k-1)}.
\end{align*}
Applying Stirling's formula to $\Gamma(k)$ and using $k>2\pi e n$, we obtain
\begin{align*}
  \sum_{\ell=1}^L\abs{\wh{g_\ell}(n)}^2
  &\ll
  \frac{(4\pi n)^{k-1}}{\Gamma(k-1)}
  \frac{(2\pi n)^{k-3/2}}{(k/e)^k}
  +\frac{(4\pi n)^{k-1}}{\Gamma(k-1)}\\
  &\ll\frac{(4\pi n)^{k-5/2}}{\Gamma(k-1)}
  +\frac{(4\pi n)^{k-1}}{\Gamma(k-1)}\\
  &\ll\frac{(4\pi n)^{k-1}}{\Gamma(k-1)}.
\end{align*}
Since the left-hand side contains the contribution of $g_1$, the result follows.
\end{proof}

\subsection{\texorpdfstring{$L^2$}{L2} bounds for the theta series}

We also require the following estimate for the $L^2$-norm of $\theta_{k,j}$.

\begin{lemma}\label{lem:L2}
For every $\vep>0$, every $k\ge5/2$, and every $1\le j\le2(k-1)$,
\begin{align}\label{L2 bound eq}
  \|\theta_{k,j}\|_2^2
  &=\int_{\Gamma_0(4)\bk\half}
    y^{k-2}\abs{\theta_{k,j}(z)}^2\,\rd x\,\rd y\\
  &\ll\frac{\Gamma(k)}{(4\pi)^k}
  \left(
    \abs{W_{k,j}(1)}^2
    +\frac{1}{2^{k-3/2-\vep}}
  \right).
\end{align}
Moreover,
\begin{align}\label{L2 bnd 2}
  \sum_{j=1}^{2(k-1)}\|\theta_{k,j}\|_2^2
  \ll\frac{\Gamma(k)}{(4\pi)^k}k.
\end{align}
\end{lemma}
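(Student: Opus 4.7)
The plan is to apply Rankin--Selberg unfolding to express $\|\theta_{k,j}\|_2^2$ as a residue of the Dirichlet series $D(s) := \sum_{n \ge 1} \abs{a_{k,j}(n)}^2/n^{k+s-1}$, and then to isolate the contribution of $n = 1$ via Mellin inversion, controlling the tail using Lemma~\ref{lem:trivial bound}.

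First I would consider the Eisenstein series $E(z,s) = \sum_{\gamma \in \Gamma_\infty \bk \Gamma_0(4)} \Im(\gamma z)^s$ at the cusp $\infty$, which has a simple pole at $s = 1$ with positive residue $R_E$ independent of $k$. The standard Rankin--Selberg unfolding, followed by Parseval on the strip $[0,1] \times (0,\infty)$, gives
\[
\int_{\Gamma_0(4)\bk\half} y^{k-2} \abs{\theta_{k,j}(z)}^2 E(z,s)\, \rd x\, \rd y = \frac{\Gamma(k+s-1)}{(4\pi)^{k+s-1}} D(s).
\]
Taking residues at $s = 1$ yields $\|\theta_{k,j}\|_2^2 \asymp \frac{\Gamma(k)}{(4\pi)^k} \mathrm{Res}_{s=1} D(s)$, so it suffices to show $\mathrm{Res}_{s=1} D(s) \ll \abs{W_{k,j}(1)}^2 + 2^{-k+3/2+\vep}$.

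To bound the residue, I would use Mellin inversion with a smooth test function $V$ concentrated near $n = 1$: expressing $\sum_n \abs{a_{k,j}(n)}^2 V(n)/n^{k-1} = \frac{1}{2\pi i} \int_{(\sigma)} \widetilde V(s) D(s)\, \rd s$ and shifting the contour past $s = 1$, the residue equals the arithmetic sum minus a shifted contour integral. The dominant arithmetic term comes from $n = 1$ and equals $\abs{a_{k,j}(1)}^2 V(1) = \abs{W_{k,j}(1)}^2 V(1)$. For $n \ge 2$ and for the shifted contour integral, one applies Lemma~\ref{lem:trivial bound} together with Stirling's formula; the geometric factor $2^{-k+3/2+\vep}$ comes out of the $n = 2$ contribution. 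A bootstrap argument then absorbs the $\|\theta_{k,j}\|_2^2$ appearing on the right-hand side (through Lemma~\ref{lem:trivial bound}) into the left, producing \eqref{L2 bound eq}.

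For the summed bound \eqref{L2 bnd 2}, I would apply the addition theorem for spherical harmonics, $\sum_{j=1}^{2(k-1)} \phi_{k,j}(x) \overline{\phi_{k,j}(y)} = \frac{2m+1}{4\pi} P_m(x \cdot y)$ with $m = k - 3/2$. Applied to the six points of $\wh\cE(1) = \{\pm e_1, \pm e_2, \pm e_3\}$, whose pairwise inner products lie in $\{\pm 1, 0\}$, and using $P_m(\pm 1) = (\pm 1)^m$ together with $P_m(0) \ll m^{-1/2}$, this gives $\sum_{j=1}^{2(k-1)} \abs{W_{k,j}(1)}^2 \ll k$. Combined with \eqref{L2 bound eq} and the fact that $2(k-1) \cdot 2^{-k+3/2+\vep} = o(k)$, this yields \eqref{L2 bnd 2}.

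The main obstacle is isolating the precise $2^{-k+3/2+\vep}$ factor uniformly in $k$. This requires carefully combining the two regimes of Lemma~\ref{lem:trivial bound} (which splits at $k \sim 2\pi e n$) with the Mellin smoothing weights and Stirling asymptotics, and carrying out the bootstrap in a way that does not destroy the geometric decay extracted from the $n = 2$ term.
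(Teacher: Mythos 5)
Your opening move (unfolding $\abs{\theta_{k,j}}^2$ against the Eisenstein series on $\Gamma_0(4)$ and taking the residue at $s=1$) is the same as the paper's, but the way you propose to bound the residue has a genuine gap. The residue $\operatorname{Res}_{s=1}D(s)$, with $D(s)=\sum_n \abs{a_{k,j}(n)}^2 n^{-(s+k-1)}$, is by its nature the mean value $\lim_{X\to\infty}X^{-1}\sum_{n\le X}\abs{a_{k,j}(n)}^2 n^{-(k-1)}$, an average over \emph{all} $n$; it is not dominated by the $n=1$ term, and no test function concentrated near $n=1$ changes this: in your identity the shifted-contour integral must carry essentially the whole residue whenever $W_{k,j}(1)$ is small, so nothing has been isolated. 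Moreover the tool you propose for the tail and for the shifted contour, Lemma~\ref{lem:trivial bound}, is normalized by $\|\theta_{k,j}\|_2^2$ --- the very quantity being estimated --- and the bootstrap cannot close: in the regime $k>2\pi e n$ that lemma gives $\frac{\Gamma(k)}{(4\pi)^{k}}\frac{\abs{a_{k,j}(n)}^2}{n^{k-1}}\ll \|\theta_{k,j}\|_2^2\,\frac{k-1}{4\pi}$ for \emph{each} $n\ge 2$, with no decay in $n$, and in the complementary regime the factor is $\approx k^{1/2}n^{1/2+\vep}\|\theta_{k,j}\|_2^2$; the quantity you would need to absorb is therefore larger than the left-hand side by a power of $k$, not smaller. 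Finally, the claimed geometric gain from $n=2$ is an artifact of normalization: since $a_{k,j}(n)=n^{k/2-3/4}W_{k,j}(n)$, one has $\abs{a_{k,j}(2)}^2/2^{k-1}=\abs{W_{k,j}(2)}^2/\sqrt{2}$, which carries no factor $2^{-k}$ at all.

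The paper's proof avoids all of this and never touches Lemma~\ref{lem:trivial bound}: after unfolding, it expresses the resulting Dirichlet series directly in terms of the Weyl sums $W_{k,j}(L)$ and bounds every term with $L\ge 2$ by the elementary, $\|\theta_{k,j}\|_2$-independent estimate $\abs{W_{k,j}(L)}\le N_L\|\phi_{k,j}\|_\infty\ll L^{1/2+\vep}k^{1/2}$, the exponential smallness in \eqref{L2 bound eq} coming from the weights $L^{-k}$ attached to the Weyl sums in the paper's form of the series (note that this identification quietly absorbs the homogeneity factor $L^{k-3/2}$ relating $a_{k,j}(L)$ to $W_{k,j}(L)$ --- exactly the normalization point on which your $n=2$ heuristic founders, and a point worth treating carefully if you rework the argument). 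Your deduction of \eqref{L2 bnd 2} from \eqref{L2 bound eq} via the addition theorem is fine and is essentially the paper's (which only needs the diagonal identity $\sum_{j}\abs{\phi_{k,j}(z)}^2=\frac{2(k-1)}{2\pi}$ over the six points of $\wh{\cE}(1)$), but it rests on \eqref{L2 bound eq}, which your argument does not establish.
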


\begin{proof}
We use the standard Rankin--Selberg device for estimating the $L^2$-norm of a theta series. Consider the Eisenstein series
\begin{align*}
  E(z,s)
  :=\sum_{\gamma\in\Gamma_\infty\bk\Gamma_0(4)}\Im(\gamma z)^s,
\end{align*}
where $\Gamma_\infty$ is the stabilizer of $\infty$ in $\Gamma_0(4)$. We study
\begin{align*}
  \int_{\Gamma_0(4)\bk\half}
  \abs{\theta_{k,j}(z)}^2E(z,s)y^{k-2}\,\rd x\,\rd y.
\end{align*}
Unfolding the Eisenstein series gives
\begin{align*}
  \int_{\Gamma_\infty\bk\half}
  \abs{\theta_{k,j}(z)}^2y^{s+k-2}\,\rd x\,\rd y.
\end{align*}
Substituting
\begin{align*}
  \theta_{k,j}(z)
  =\sum_{\ell\in\Z^3}\phi_{k,j}(\ell)e(z|\ell|^2)
\end{align*}
and using the orthogonality of the exponential functions in the $x$-variable, we obtain
\begin{align*}
  \int_0^\infty
  \sum_{L\in\N}
  \sum_{\substack{\ell,\wt\ell\in\Z^3\\|\ell|^2=|\wt\ell|^2=L}}
  \phi_{k,j}(\ell)\overline{\phi_{k,j}(\wt\ell)}
  e^{-4\pi Ly}y^{s+k-2}\,\rd y.
\end{align*}
The change of variables $y\mapsto4\pi Ly$ transforms this expression into
\begin{align*}
  \Gamma(s+k-1)
  \sum_{L\in\N}\frac{1}{(4\pi L)^{s+k-1}}
  \abs{
    \sum_{\substack{\ell\in\Z^3\\|\ell|^2=L}}
    \phi_{k,j}(\ell)
  }^2.
\end{align*}
In terms of the Weyl sums~\eqref{Weyl sum}, this yields
\begin{align*}
  \Gamma(s+k-1)
  \sum_{L\in\cN}\frac{\abs{W_{k,j}(L)}^2}{(4\pi L)^{s+k-1}}.
\end{align*}
Consequently,
\begin{align*}
  &\int_{\Gamma_0(4)\bk\half}
  \abs{\theta_{k,j}(z)}^2E(z,s)y^{k-2}\,\rd x\,\rd y
  \ll
  \frac{\Gamma(s+k-1)}{(4\pi)^{s+k}}
  \sum_{L\in\cN}\frac{\abs{W_{k,j}(L)}^2}{L^{s+k-1}}.
\end{align*}
Taking residues at $s=1$, we obtain
\begin{align*}
  \int_{\Gamma_0(4)\bk\half}
  \abs{\theta_{k,j}(z)}^2y^{k-2}\,\rd x\,\rd y
  \ll
  \frac{\Gamma(k)}{(4\pi)^k}
  \sum_{L\in\cN}\frac{\abs{W_{k,j}(L)}^2}{L^k}.
\end{align*}
The spherical-harmonic sup-norm bound
\begin{align*}
  \|\phi_{k,j}\|_\infty\ll k^{1/2}
\end{align*}
(see, for example,~\cite[Chapter IV, Corollary 2]{SteinWeiss1971}) implies
\begin{align*}
  W_{k,j}(L)\ll N_Lk^{1/2}\ll L^{1/2+\vep}k^{1/2}.
\end{align*}
Therefore,
\begin{align*}
  \int_{\Gamma_0(4)\bk\half}
  \abs{\theta_{k,j}(z)}^2y^{k-2}\,\rd x\,\rd y
  &\ll\frac{\Gamma(k)}{(4\pi)^k}
  \left(
    \abs{W_{k,j}(1)}^2
    +k\sum_{L\ge2}\frac{1}{L^{k-1/2-\vep}}
  \right)\\
  &\ll\frac{\Gamma(k)}{(4\pi)^k}
  \left(
    \abs{W_{k,j}(1)}^2
    +\frac{1}{2^{k-3/2-\vep}}
  \right),
\end{align*}
which proves~\eqref{L2 bound eq}.

To prove~\eqref{L2 bnd 2}, sum~\eqref{L2 bound eq} over $j$:
\begin{align*}
  \sum_{j=1}^{2(k-1)}\|\theta_{k,j}\|_2^2
  &\ll\frac{\Gamma(k)}{(4\pi)^k}
  \left(
    \sum_{j=1}^{2(k-1)}\abs{W_{k,j}(1)}^2
    +\frac{k}{2^k}
  \right)\\
  &\ll\frac{\Gamma(k)}{(4\pi)^k}
  \left(
    \sum_{j=1}^{2(k-1)}
    \abs{
      \sum_{\vect{x}\in\wh{\cE}(1)}\phi_{k,j}(\vect{x})
    }^2
    +\frac{k}{2^k}
  \right).
\end{align*}
The addition formula for ultraspherical polynomials gives
\begin{align*}
  \sum_{j=1}^{2(k-1)}\abs{\phi_{k,j}(z)}^2
  =\frac{2(k-1)}{2\pi}.
\end{align*}
see~\cite[(2.11)]{LubotzkyPhillipsSarnak1986}. Since $\#\wh{\cE}(1)=6$, it follows that
\begin{align*}
  \sum_{j=1}^{2(k-1)}\|\theta_{k,j}\|_2^2
  &\ll\frac{\Gamma(k)}{(4\pi)^k}
  \left(
    \sum_{\vect{x}\in\wh{\cE}(1)}
    \sum_{j=1}^{2(k-1)}\abs{\phi_{k,j}(\vect{x})}^2
    +\frac{k}{2^k}
  \right)\\
  &\ll\frac{\Gamma(k)}{(4\pi)^k}k.
\end{align*}
\end{proof}

\section{Bounds for the Rankin--Selberg \texorpdfstring{$L$}{L}-series}

We now derive the mean-square estimates for the Fourier coefficients of $\theta_{k,j}$. Define the normalized Dirichlet series
\begin{align*}
  L_{k,j}(s):=\sum_{n\ge1}\frac{\abs{b_{k,j}(n)}^2}{\|\theta_{k,j}\|_2^2n^s},
\end{align*}
and its completion
\begin{align*}
  \Lambda(s):=\zeta(2s)(2\pi)^{-2(s+k-1)}\gamma(k,s)L_{k,j}(s),
\end{align*}
where
\begin{align*}
  \gamma(k,s):=\Gamma(s)\Gamma(s+k-1).
\end{align*}
We use the following standard Rankin--Selberg fact.

\begin{lemma}\label{lem:complete}
The completed series $\Lambda(s)$ admits a meromorphic continuation to $\C$, with a simple pole at $s=1$, and satisfies the functional equation
\begin{align}
  \Lambda(s)=\Lambda(1-s).
\end{align}
\end{lemma}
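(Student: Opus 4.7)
The plan is a standard Rankin--Selberg unfolding argument. Let $E(z,s) = \sum_{\gamma \in \Gamma_\infty \bk \Gamma_0(4)} \Im(\gamma z)^s$ denote the Eisenstein series at the cusp $\infty$ introduced in the proof of Lemma~\ref{lem:L2}. I would begin with the integral
\begin{equation*}
I(s) := \int_{\Gamma_0(4)\bk \half} |\theta_{k,j}(z)|^2 \, y^{k-2}\, E(z,s)\, \rd x\, \rd y,
\end{equation*}
absolutely convergent for $\Re(s)>1$. Unfolding $E(z,s)$ over $\Gamma_\infty \bk \Gamma_0(4)$ and inserting the Fourier expansion of $\theta_{k,j}$, in the same way as in the proof of Lemma~\ref{lem:L2} but now with $s$ left as a free parameter, gives
\begin{equation*}
I(s) = \frac{\Gamma(s+k-1)}{(4\pi)^{s+k-1}}\, \|\theta_{k,j}\|_2^2 \, L_{k,j}(s).
\end{equation*}

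Multiplying both sides by $\pi^{-s}\Gamma(s)\zeta(2s)$ and collecting the powers of $2$ and $\pi$, one checks the identity
\begin{equation*}
\Lambda(s) = \frac{\pi^{-(k-1)}}{\|\theta_{k,j}\|_2^2}\int_{\Gamma_0(4)\bk \half} |\theta_{k,j}(z)|^2\, y^{k-2}\, \wh E(z,s)\, \rd x\, \rd y,
\end{equation*}
where $\wh E(z,s) := \pi^{-s}\Gamma(s)\zeta(2s)E(z,s)$ is the completed Eisenstein series. Both assertions of the lemma then follow directly from the classical analytic properties of $\wh E(z,s)$: it admits a meromorphic continuation to all of $\C$ with a simple pole only at $s=1$ (with a nonzero constant residue proportional to $\operatorname{vol}(\Gamma_0(4)\bk\half)^{-1}$) and satisfies a functional equation under $s \mapsto 1-s$. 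The pole passes through the integral to produce the simple pole of $\Lambda$ at $s=1$, while the functional equation of $\wh E$ translates into $\Lambda(s) = \Lambda(1-s)$.

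The step I expect to demand the most care is the functional equation on $\Gamma_0(4)$: because the group has three cusps, the standard functional equation for $E_\infty(z,s)$ alone couples it to the Eisenstein series at the cusps $0$ and $1/2$ through the scattering matrix. The way around this is that the integrand $|\theta_{k,j}(z)|^2 y^k$ is invariant under the Atkin--Lehner/Fricke involutions permuting the cusps, so the Rankin--Selberg integral against the Eisenstein series at each cusp produces the same Dirichlet series up to harmless explicit factors; these combine into the symmetric statement $\Lambda(s) = \Lambda(1-s)$. Alternatively one may invoke directly the Shimura/Rankin theory for half-integral weight cusp forms on $\Gamma_0(4)$, where this bookkeeping has been carried out once and for all.
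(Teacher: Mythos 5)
Your proposal is essentially the paper's own proof: the same Rankin--Selberg unfolding of $|\theta_{k,j}(z)|^2 y^k$ against the level-$4$ Eisenstein series, multiplication by $\pi^{-s}\Gamma(s)\zeta(2s)$ to complete it, and appeal to the meromorphic continuation and functional equation of the completed Eisenstein series to transfer both properties to $\Lambda$. Your closing paragraph on the three cusps of $\Gamma_0(4)$ even addresses a bookkeeping point the paper passes over in silence, though note that the normalizer element $z \mapsto z+\tfrac12$, which carries the cusp $0$ to the cusp $\tfrac12$, does not leave $|\theta_{k,j}(z)|^2 y^k$ invariant, so of your two suggested fixes the appeal to the Shimura--Rankin theory for half-integral weight forms is the safer one.
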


\begin{proof}
Consider the Eisenstein series
\begin{align}
  E(z,s):=\sum_{\gamma\in\Gamma_\infty\bk\Gamma_0(4)}\Im(\gamma z)^s.
\end{align}
Parseval's identity gives
\begin{align*}
  &(4\pi)^{-(s+k-1)}\Gamma(s+k-1)
  \sum_{n\ge1}\frac{\abs{a_{k,j}(n)}^2}{n^{s+k-1}}=\int_0^1\int_0^\infty\abs{\theta_{k,j}(z)}^2y^ky^s\frac{\rd y}{y^2}\rd x.
\end{align*}
The function $\abs{\theta_{k,j}(z)}^2y^k$ is $\Gamma_0(4)$-invariant. Refolding the integral therefore yields
\begin{align*}
  &(4\pi)^{-(s+k-1)}\Gamma(s+k-1)
  \sum_{n\ge1}\frac{\abs{a_{k,j}(n)}^2}{n^{s+k-1}}=\int_{\Gamma_0(4)\bk\half}
  \abs{\theta_{k,j}(z)}^2\Im(z)^kE(z,s)\mu(\rd z).
\end{align*}
Multiplying both sides by $\pi^{-(s+k-1)}\Gamma(s)\zeta(2s)$ and using the functional equation of the Eisenstein series gives the asserted functional equation under $s\mapsto1-s$. The meromorphic continuation of $\Lambda$ follows in the same way from that of $E(z,s)$; see, for example,~\cite{Iwaniec1997}.
\end{proof}

\subsection{A critical-line estimate}

We next bound $L_{k,j}(s)$ on the critical line. Following the approximate-functional-equation argument in~\cite{Blomer2020}, itself based on~\cite[Theorem 5.3 and Proposition 5.4]{IwaniecKowalski2004}, we prove the following estimate.

\begin{lemma}\label{lem:crit line}
For every $\vep>0$, every $1\le j\le2(k-1)$, and $\abs{t}\ll k^\vep X^\vep$,
\begin{align}\label{Lb half bound}
  \abs{L_{k,j}\left(\frac12+it\right)}
  \ll\frac{(4\pi)^k}{\Gamma(k-1)}X^\vep k^{1/2+\vep},
\end{align}
where the implied constant depends only on $\vep$.
\end{lemma}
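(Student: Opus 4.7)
The plan is to derive an approximate functional equation (AFE) for $L_{k,j}(s)$ from Lemma \ref{lem:complete}, then apply the pointwise bounds of Lemma \ref{lem:trivial bound} to the resulting truncated Dirichlet sums of length $\asymp k$. The setup follows the standard Mellin-inversion recipe (as in \cite[Theorem 5.3]{IwaniecKowalski2004} and \cite{Blomer2020}): for a fixed even test function $G(u)$, holomorphic in a wide vertical strip, with rapid decay in $|\Im u|$ and $G(0)=1$, consider
\begin{align*}
  I(s) = \frac{1}{2\pi i}\int_{(3)} \Lambda(s+u)\,G(u)\,Y^{u}\,\frac{du}{u},
\end{align*}
where $Y>0$ is a free parameter. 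Shifting the contour to $\Re u = -3$ crosses the pole at $u=0$ (producing $\Lambda(s)$) and the simple pole at $u = 1-s$ of $\Lambda$ (producing a residue term $R(s)$). The shifted integral is transformed via $\Lambda(s+u)=\Lambda(1-s-u)$ and $u\mapsto -u$ into an integral picking out the dual Dirichlet series. Undoing the $\zeta(2s)(2\pi)^{-2(s+k-1)}\gamma(k,s)$ prefactor, one arrives at
\begin{align*}
  L_{k,j}(s) \;=\; \sum_{n\geq 1}\frac{|b_{k,j}(n)|^{2}}{\|\theta_{k,j}\|_2^{2}\,n^{s}}\,V^{+}_{s}\!\left(\frac{n}{Y}\right)
  \;+\;\Psi(s)\!\sum_{n\geq 1}\frac{|b_{k,j}(n)|^{2}}{\|\theta_{k,j}\|_2^{2}\,n^{1-s}}\,V^{-}_{s}(nY)\;+\;R(s),
\end{align*}
where $\Psi(s)$ is the ratio of the archimedean factors and $V^{\pm}_{s}$ are smooth weights decaying faster than any polynomial once their arguments exceed $\asymp 1$.

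Next, I specialize to $s=\tfrac12+it$ with $|t|\ll k^{\vep}X^{\vep}$. By Stirling applied to $\gamma(k,\tfrac12\pm it)$, the phase factor $\Psi(\tfrac12+it)$ has modulus $O(1)$, and the natural length at which $V^{\pm}_{s}$ transitions is $n\asymp k$ (coming from the $\Gamma(s+k-1)$ factor). I therefore take $Y=k$, which happens to coincide exactly with the transition $k=2\pi e n$ in Lemma \ref{lem:trivial bound}.

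I then split each sum at $n=k$ and bound termwise. For the dense range $n\leq k$, Lemma \ref{lem:trivial bound} in the regime $k>2\pi e n$ gives
\begin{align*}
  \frac{|b_{k,j}(n)|^{2}}{\|\theta_{k,j}\|_2^{2}} \;=\; \frac{|a_{k,j}(n)|^{2}}{\|\theta_{k,j}\|_2^{2}\,n^{k-1}}\;\ll\;\frac{(4\pi)^{k-1}}{\Gamma(k-1)},
\end{align*}
and summing $n^{-1/2}$ over $n\leq k$ produces $k^{1/2}$, yielding a contribution $\ll (4\pi)^{k}k^{1/2}/\Gamma(k-1)$. For the tail range $n>k$, Lemma \ref{lem:trivial bound} in the regime $k<2\pi e n$ gives a bound $\ll(4\pi)^{k}n^{1/2+\vep}/(\Gamma(k-1)k^{1/2})$, and the super-polynomial decay of $V^{\pm}_{s}$ beyond $n=k$ (with the loss absorbed by powers of $(1+|t|)\ll k^{\vep}X^{\vep}$ from Stirling) gives the same order of magnitude $\ll(4\pi)^{k}k^{1/2+\vep}/\Gamma(k-1)$. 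The dual sum is treated symmetrically since $|\Psi(\tfrac12+it)|=1$.

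Finally, the residue $R(s)$ is handled by relating the residue of $\Lambda$ at $s=1$ to the Rankin--Selberg integral evaluated against the residue of the Eisenstein series (a constant multiple of the hyperbolic volume of $\Gamma_{0}(4)\bk\mathbb{H}$). After dividing out by $\|\theta_{k,j}\|_2^{2}$, this residue contributes at most $O((4\pi)^{k}/\Gamma(k-1))$, absorbed into the main bound. The hardest part of the argument is the bookkeeping in step one: one must verify that the test function $V^{\pm}_{s}$, which inherits its shape from Stirling's expansion of $\gamma(k,s+u)/\gamma(k,s)$, decays rapidly enough past $n=k$ to dominate the polynomial growth $n^{1/2+\vep}$ coming from Lemma \ref{lem:trivial bound}, uniformly in the range $|t|\ll k^{\vep}X^{\vep}$; away from this technical step the rest of the proof is routine.
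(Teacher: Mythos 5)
Your plan is essentially the paper's proof: the paper likewise derives an approximate functional equation from Lemma \ref{lem:complete} (via the contour integral $\frac{1}{2\pi i}\int e^{u^2}\Lambda(s+u)\frac{du}{u}$, shifted to pick up the poles at $u=0$ and $u=1-s$ and combined with the functional equation), splits the resulting Dirichlet sums at length $\asymp k$ (at $T=\lceil 2\pi e k\rceil$, matching the two regimes of Lemma \ref{lem:trivial bound}), bounds termwise exactly as you do, and treats the residue term separately. The only cosmetic difference is that instead of packaging the archimedean factors into smooth weights $V^{\pm}_s$ with a length parameter $Y$, the paper bounds the short range and the tail by two separate contour shifts ($\Re u = \tfrac12+\sigma$ and $\Re u = A$ large) of the same integral; your quantitative bookkeeping agrees with the paper's bound $\frac{(4\pi)^k}{\Gamma(k-1)}X^{\vep}k^{1/2+\vep}$.
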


\begin{proof}
Let $0<\Re(s)<2$ and consider
\begin{align*}
  W_+(s):=\frac{1}{2\pi i}\int_{(1)}e^{u^2}\Lambda(u+s)\,\frac{\rd u}{u}.
\end{align*}
Moving the contour to $\Re(u)=-1$ crosses the poles at $u=0$ and $u=1-s$, and therefore
\begin{align*}
  W_+(s)
  =\frac{1}{2\pi i}\int_{(-1)}e^{u^2}\Lambda(u+s)\,\frac{\rd u}{u}
  +\Lambda(s)
  +\frac{e^{(1-s)^2}}{1-s}\Res(\Lambda,1).
\end{align*}
Set $s=1/2+it$ and apply the functional equation to the integral on the shifted line. We obtain
\begin{align*}
  \Lambda(s)
  &=W_+(s)+\frac{1}{2\pi i}\int_{(-1)}
  e^{u^2}\Lambda\left(-\frac12+it+i\tau\right)\,\frac{\rd u}{u}
  +\frac{e^{(1-s)^2}}{1-s}\Res(\Lambda,1)\\
  &=W_+(s)+W_-(s)
  +\frac{e^{(1-s)^2}}{1-s}\Res(\Lambda,1),
\end{align*}
where
\begin{align*}
  W_-(s):=\frac{1}{2\pi i}\int_{(-1)}
  e^{u^2}\Lambda\left(\frac32+it+i\tau\right)\,\frac{\rd u}{u}.
\end{align*}
The residue computation from Lemma~\ref{lem:L2} gives
\begin{align*}
  \Lambda\left(\frac12+it\right)
  \ll W_+(s)+W_-(s)
  +\abs{\frac{e^{(1/2-it)^2}}{1/2-it}}\frac{1}{\pi^k}.
\end{align*}

We first estimate $W_+(s)$. Dividing by the gamma factor and expanding the Dirichlet series, we have
\begin{align*}
  \frac{W_+(s)}{\gamma(k,s)}
  \ll\abs{
  \sum_{n\ge1}\frac{\abs{b_{k,j}(n)}^2}{\|\theta_{k,j}\|_2^2n^s}
  \int_{(1)}\frac{e^{u^2}}{n^u}
  \frac{\gamma(k,u+s)}{\gamma(k,s)}
  \zeta(2(u+s))(2\pi)^{-2(u+s+k)}\,\frac{\rd u}{u}
  }.
\end{align*}
We split the sum at
\begin{align*}
  T:=\left\lfloor\frac{k}{2\pi e}\right\rfloor+1,
\end{align*}
writing $W_1$ for the contribution of $n<T$ and $W_2$ for that of $n\ge T$.

For $W_2$, move the contour to $\Re(u)=A$. This gives
\begin{align*}
  W_2(s)
  &:=\bigg|
  \sum_{n\ge T}\frac{\abs{b_{k,j}(n)}^2}{\|\theta_{k,j}\|_2^2n^s}
  \int_\R\frac{e^{A^2-\tau^2+2iA\tau}}{n^{A+i\tau}}
  \frac{\gamma(k,A+i\tau+s)}{\gamma(k,s)}\\
  &\phantom{+++++++++++}\cdot\zeta(2(A+i\tau+s))
  (2\pi)^{-2(A+i\tau+s+k)}
  \frac{\rd\tau}{A+i\tau}
  \bigg|.
\end{align*}
The factor $e^{-\tau^2}$ makes the integral rapidly convergent, so we may restrict to $\abs{\tau}\ll(k+\abs{t})^\vep$. In this range,
\begin{align*}
  \zeta(2(A+i\tau+s))\ll(k+\abs{t})^\vep.
\end{align*}
Moreover, Stirling's formula gives
\begin{align}
  \frac{\gamma(k,A+i\tau+s)}{\gamma(k,s)}
  \ll\exp(\pi\abs{A+i\tau})(\abs{s}+3)^A(\abs{s+k}+3)^A;
\end{align}
see~\cite[p.~100]{IwaniecKowalski2004}. Hence
\begin{align*}
  W_2(s)
  &\ll_A(k+\abs{t})^\vep(2\pi)^{-2k}
  \sum_{n\ge T}\frac{\abs{b_{k,j}(n)}^2}{\|\theta_{k,j}\|_2^2n^{1/2}}
  \int_{\abs{\tau}\le(k+\abs{t})^\vep}
  \abs{\frac{e^{-\tau^2}}{n^A}
  \frac{\gamma(k,A+i\tau+s)}{\gamma(k,s)}}\,\rd\tau\\
  &\ll_A(k+\abs{t})^\vep(1+\abs{t})^A(2\pi)^{-2k}
  \sum_{n\ge T}\frac{\abs{b_{k,j}(n)}^2k^A}{\|\theta_{k,j}\|_2^2n^{A+1/2}}.
\end{align*}
Lemma~\ref{lem:trivial bound} now yields
\begin{align*}
  \frac{W_2(1/2+it)}{\zeta(1+2it)(2\pi)^{-k}}
  &\ll(kX)^\vep
  \sum_{n\ge T}\frac{k^A}{n^{A+1/2}}
  \frac{\abs{b_{k,j}(n)}^2}{\|\theta_{k,j}\|_2^2}\\
  &\ll\frac{(4\pi)^k}{\Gamma(k-1)k^{1/2}}
  (kX)^\vep k^A\sum_{n\ge T}\frac{1}{n^{A-\vep}}\\
  &\ll\frac{(4\pi)^k}{\Gamma(k-1)}(kX)^\vep k^{1/2+\vep}.
\end{align*}

For $W_1$, move the contour to $\Re(u)=1/2+\sigma$. As above, the Gaussian factor restricts the integral to $\abs{\Im(u)}\ll(k+\abs{t})^\vep$, and we obtain
\begin{align*}
  W_1(s)
  :=\abs{
  \sum_{n<T}\frac{\abs{b_{k,j}(n)}^2}{\|\theta_{k,j}\|_2^2n^s}
  \int_{(1/2+\sigma)}\frac{e^{u^2}}{n^u}
  \frac{\gamma(k,u+s)}{\gamma(k,s)}
  \zeta(2(u+s))(2\pi)^{-2(u+s+k)}\,\frac{\rd u}{u}
  }.
\end{align*}
Consequently,
\begin{align*}
  W_1(s)
  &\ll\sum_{n<T}\frac{\abs{b_{k,j}(n)}^2}{\|\theta_{k,j}\|_2^2n^{1/2}}
  \abs{\int_{\abs{\tau}<(\abs{t}+k)^\vep}
  \frac{e^{u^2}}{n^u}
  \frac{\gamma(k,u+s)}{\gamma(k,s)}
  \zeta(2(u+s))(2\pi)^{-2(u+s+k)}\,\rd\tau}\\
  &\ll(2\pi)^{-2k}(\abs{t}+k)^\vep
  \sum_{n<T}\frac{\abs{b_{k,j}(n)}^2}{\|\theta_{k,j}\|_2^2n^{1+\sigma}}
  \int_{\abs{\tau}<(\abs{t}+k)^\vep}
  e^{u^2}\frac{\gamma(k,u+s)}{\gamma(k,s)}\,\rd\tau\\
  &\ll(2\pi)^{-2k}(\abs{t}+k)^\vep
  (1+\abs{t})^{1/2+\sigma}k^{1/2+\sigma}
  \sum_{n<T}\frac{\abs{b_{k,j}(n)}^2}{\|\theta_{k,j}\|_2^2n^{1+\sigma}}.
\end{align*}
A second application of Lemma~\ref{lem:trivial bound} gives
\begin{align*}
  \frac{W_1(1/2+it)}{\zeta(1+2it)(2\pi)^{-k}}
  &\ll(1+\abs{t})^{1/2+\sigma+\vep}
  k^{1/2+\sigma+\vep}
  \sum_{n<T}\frac{1}{n^{1+\sigma}}
  \frac{\abs{b_{k,j}(n)}^2}{\|\theta_{k,j}\|_2^2}\\
  &\ll(Xk)^\vep k^{1/2+\sigma+\vep}
  \frac{(4\pi)^{k-1}}{\Gamma(k-1)}.
\end{align*}
Taking $\sigma=\vep$ gives the required estimate for $W_1$. The term $W_-$ is treated in the same way. Combining these bounds proves~\eqref{Lb half bound}.
\end{proof}

\subsection{Estimating \texorpdfstring{$\cF_X(k)$}{F X(k)}}

We now express $\cF_X(k)$ in terms of the residue of $L_{k,j}(s)$ at $s=1$.

\begin{lemma}\label{lem:Fx est}
For every $\vep>0$,
\begin{align}\label{Fx bound}
  \cF_X(k)
  \ll\sum_{j=1}^{2(k-1)}
  \frac{\|\theta_{k,j}\|_2^2}{(4\pi)^{-k}\Gamma(k)}X
  +X^{1/2+\vep}k^{5/2+\vep}.
\end{align}
\end{lemma}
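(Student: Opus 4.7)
\emph{Plan.} The approach is Perron's formula followed by a contour shift. For each $j$, Perron's identity (smoothed by a mollifier $e^{u^2}$ to guarantee convergence) gives
\begin{align*}
\sum_{n \le X} |b_{k,j}(n)|^2 = \frac{\|\theta_{k,j}\|_2^2}{2\pi i} \int_{(c)} L_{k,j}(s)\, \frac{X^s}{s}\, \rd s + (\text{negligible error})
\end{align*}
for $c = 1 + \vep$. Summing over $j$ and shifting the contour to $\Re(s) = 1/2 + \vep$, we cross the simple pole of $L_{k,j}$ at $s=1$ provided by Lemma \ref{lem:complete}, producing a main term from the residue plus a shifted contour integral to be estimated.

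\emph{Residue at $s=1$.} From the Rankin-Selberg unfolding performed in the proof of Lemma \ref{lem:L2},
\begin{align*}
\|\theta_{k,j}\|_2^2\, L_{k,j}(s) = \frac{(4\pi)^{s+k-1}}{\Gamma(s+k-1)} \int_{\Gamma_0(4) \bk \half} |\theta_{k,j}(z)|^2 y^k E(z,s)\, \mu(\rd z).
\end{align*}
The Eisenstein series $E(z,s)$ has a simple pole at $s=1$ with residue an absolute constant $c_0$ independent of $k$, and the remaining integral reduces exactly to $\|\theta_{k,j}\|_2^2$. Hence $\operatorname{Res}_{s=1} L_{k,j}(s) = c_0 (4\pi)^k/\Gamma(k)$, and summing the contributions over $j$ produces the main term with $C = c_0$.

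\emph{Error bound.} On the shifted line $\Re(s) = 1/2+\vep$, Lemma \ref{lem:crit line} furnishes the uniform bound $|L_{k,j}(1/2+it)| \ll (4\pi)^k X^\vep k^{1/2+\vep}/\Gamma(k-1)$, valid for $|t|$ up to a fixed power of $k^\vep X^\vep$; beyond this range the mollifier $e^{u^2}$ contributes $O(X^{-A})$ for any $A$. Combined with the aggregate bound $\sum_{j} \|\theta_{k,j}\|_2^2 \ll \Gamma(k)\, k/(4\pi)^k$ from \eqref{L2 bnd 2}, we obtain
\begin{align*}
\sum_{j=1}^{2(k-1)} \|\theta_{k,j}\|_2^2\, |L_{k,j}(1/2+it)| \ll \frac{\Gamma(k)}{\Gamma(k-1)}\, k^{3/2+\vep} X^\vep \ll k^{5/2+\vep} X^\vep.
\end{align*}
Integrating against $|X^{1/2+it}/s|$ over the restricted range of $t$ delivers the claimed error $O(X^{1/2+\vep} k^{5/2+\vep})$.

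\emph{Main obstacle.} The subtlety is the order of operations in the error estimate: it is essential to sum $\|\theta_{k,j}\|_2^2$ over $j$ via \eqref{L2 bnd 2} \emph{before} applying the pointwise bound on $L_{k,j}$. Bounding each term individually and multiplying by $2(k-1)$ values of $j$ would inflate the error by an additional factor of $k$, exceeding the target exponent $5/2 + \vep$.
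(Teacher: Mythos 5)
Your proposal follows essentially the same route as the paper: a Perron/Mellin integral shifted past the pole at $s=1$, the residue identified via the Rankin--Selberg unfolding (so that $\operatorname{Res}_{s=1}L_{k,j}=C(4\pi)^k/\Gamma(k)$, exactly the paper's \eqref{Res}), and the error term controlled by combining the critical-line bound of Lemma \ref{lem:crit line} with the aggregated bound \eqref{L2 bnd 2}; your closing remark about summing $\|\theta_{k,j}\|_2^2$ over $j$ \emph{before} applying the pointwise $L$-bound is precisely the point that keeps the exponent at $5/2$, and it is what the paper does.

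The one step that does not hold as written is the opening claim that the $e^{u^2}$-mollified Perron formula reproduces the sharp sum $\sum_{n\le X}|b_{k,j}(n)|^2$ up to a ``negligible error.'' Writing $v=\log(X/n)$, the weight produced by $\frac{1}{2\pi i}\int_{(c)}e^{u^2}(X/n)^u\frac{\rd u}{u}$ differs from $\mathbbm{1}_{\{n\le X\}}$ by roughly $e^{-v^2/4}$, i.e.\ the transition region has multiplicative width of order one around $n\asymp X$. The discrepancy is therefore supported on $\asymp X$ integers with weights of size comparable to $1$, so it can contribute a quantity of the same order as the main term, not $O(X^{1/2+\vep}k^{5/2+\vep})$; as stated, the asserted equality does not follow. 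The repair is the one the paper uses: since only an upper bound on $\cF_X$ is ever needed (and the paper's own proof in fact only establishes $\ll$, despite the ``$=$'' in the statement), replace the sharp cutoff by a fixed nonnegative smooth majorant $\psi\ge\mathbbm{1}_{[0,1]}$ supported in $[0,2]$ and run your argument on $\sum_n|b_{k,j}(n)|^2\psi(n/X)$, using the rapid decay of $\widehat\psi$ to truncate the contour at $|t|\le X^\vep$ (which is also where Lemma \ref{lem:crit line} applies; note that lemma is stated on $\Re(s)=\tfrac12$, so shift to that line rather than to $\tfrac12+\vep$). With that substitution the rest of your computation — residue, truncation, and the $k^{5/2+\vep}$ bookkeeping — goes through exactly as in the paper.
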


\begin{proof}
Since only an upper bound is needed, we may smooth the sum over $n$. Let $\psi$ be a smooth function supported on $[0,2]$ and equal to $1$ on $[0,1]$. Then
\begin{align*}
  \cF_X(k)
  \ll\sum_{j=1}^{2(k-1)}\sum_{n=1}^\infty
  \abs{b_{k,j}(n)}^2\psi\left(\frac{n}{X}\right).
\end{align*}
For $s=c+it$ with $c>1$, Mellin inversion gives
\begin{align*}
  \cF_X(k)
  \ll\sum_{j=1}^{2(k-1)}\|\theta_{k,j}\|_2^2
  \frac{1}{2\pi i}\int_{c-i\infty}^{c+i\infty}
  \wh\psi(s)X^sL_{k,j}(s)\,\rd s,
\end{align*}
where $\wh\psi$ denotes the Mellin transform of $\psi$.

Moving the contour to $\Re(s)=1/2$ crosses the pole at $s=1$, and therefore
\begin{align}
  \cF_X(k)
  \ll\sum_{j=1}^{2(k-1)}\|\theta_{k,j}\|_2^2
  \left(
    \Res(L_{k,j},1)X
    +\frac{1}{2\pi i}\int_{(1/2)}L_{k,j}(s)X^s\wh\psi(s)\,\rd s
  \right).
\end{align}
The Rankin--Selberg calculation in Lemma~\ref{lem:complete} gives
\begin{align}\label{Res}
  \Res(L_{k,j},1)
  =C\frac{1}{(4\pi)^{-k}\Gamma(k)}.
\end{align}
It remains to estimate
\begin{align*}
  R
  :=\sum_{j=1}^{2(k-1)}\|\theta_{k,j}\|_2^2
  \int_{(1/2)}L_{k,j}(s)\wh\psi(s)X^s\,\rd s.
\end{align*}
The smoothness of $\psi$ gives rapid decay of $\wh\psi(s)$ as $\abs{\Im(s)}\to\infty$. We may therefore restrict the integral to $\abs{t}\le X^\vep$, at the cost of a negligible error. Thus
\begin{align*}
  R
  \ll X^{1/2}
  \sum_{j=1}^{2(k-1)}\|\theta_{k,j}\|_2^2
  \int_{-X^\vep}^{X^\vep}
  \abs{L_{k,j}\left(\frac12+it\right)
  \wh\psi\left(\frac12+it\right)}\,\rd t+o(1).
\end{align*}
Using~\eqref{Lb half bound}, we find
\begin{align*}
  R
  &\ll X^{1/2+\vep}
  \frac{(4\pi)^k}{\Gamma(k-1)}k^{1/2+\vep}
  \left(\sum_{j=1}^{2(k-1)}\|\theta_{k,j}\|_2^2\right)
  \int_0^{X^\vep}
  \abs{\wh\psi\left(\frac12+it\right)}\,\rd t\\
  &\ll X^{1/2+\vep}
  \frac{(4\pi)^k}{\Gamma(k-1)}k^{1/2+\vep}
  \sum_{j=1}^{2(k-1)}\|\theta_{k,j}\|_2^2.
\end{align*}
Lemma~\ref{lem:L2} now implies
\begin{align*}
  R\ll X^{1/2+\vep}k^{5/2+\vep},
\end{align*}
which proves~\eqref{Fx bound}.
\end{proof}

\subsection{Proof of Theorem~\ref{thm:smooth}}\label{ss:34}

The advantage of smoothing is that the spherical transform satisfies
\begin{align}\label{h bound}
  h_X(k)\ll k^{-3/2}\sigma(\Omega_X)^{1/4}
  \min\left(1,\frac{(\sin\rho)^{1/2}}
  {k^{3/2}(1-\cos\rho)}\right),
\end{align}
as follows from~\cite[(2.13)]{LubotzkyPhillipsSarnak1986}.

Recall from~\eqref{A bound} that
\begin{align*}
  A_{X,\rho}\ll\frac{1}{X^{1/2}}\sum_k h_X(k)^2\cF_X(k).
\end{align*}
Applying Lemma~\ref{lem:Fx est}, we obtain
\begin{align*}
  A_{X,\rho}
  \ll\frac{1}{X^{1/2}}\sum_k h_X(k)^2
  \left(
  \sum_{j=1}^{2(k-1)}
  \frac{\|\theta_{k,j}\|_2^2}{(4\pi)^{-k}\Gamma(k)}X
  +X^{1/2+\vep}k^{5/2+\vep}
  \right).
\end{align*}
For the first term, Lemma~\ref{lem:L2} gives
\begin{align*}
  \cM
  &:=X^{1/2}\sum_k h_X(k)^2
  \sum_{j=1}^{2(k-1)}
  \frac{\|\theta_{k,j}\|_2^2}{(4\pi)^{-k}\Gamma(k)}\\
  &\ll X^{1/2}\sum_k h_X(k)^2
  \sum_{j=1}^{2(k-1)}
  \left(\abs{W_{k,j}(1)}^2+\frac{1}{2^k}\right).
\end{align*}
The contribution involving $W_{k,j}(1)$ is precisely
\begin{align*}
  X^{1/2}\Var_\rho(\Omega_X,1),
\end{align*}
and is bounded by
\begin{align*}
  X^{1/2}\sigma(\Omega_X)
  +X^{1/2}\rho\sigma(\Omega_X)^{1/2}.
\end{align*}
The remaining contribution is estimated directly from~\eqref{h bound}. Hence
\begin{align*}
  \cM\ll X^{1/2}\sigma(\Omega_X)
  +X^{1/2}\rho\sigma(\Omega_X)^{1/2}.
\end{align*}

It remains to bound the error term
\begin{align*}
  \cE:=X^\vep\sum_k h_X(k)^2k^{5/2+\vep}.
\end{align*}
Using~\eqref{h bound}, we find
\begin{align*}
  \cE
  &\ll X^\vep\sigma(\Omega_X)^{1/2}
  \sum_k k^{-1/2+\vep}
  \min\left(1,\frac{(\sin\rho)^{1/2}}
  {k^{3/2}(1-\cos\rho)}\right)^2\\
  &\ll X^\vep\sigma(\Omega_X)^{1/2}
  \left(
  \sum_{k<1/\rho}k^{-1/2+\vep}
  +\frac{1}{\rho^3}\sum_{k>1/\rho}\frac{1}{k^{7/2-\vep}}
  \right)\\
  &\ll X^\vep\sigma(\Omega_X)^{1/2}\rho^{-1/2-\vep}.
\end{align*}
Combining the main and error terms yields
\begin{align*}
  A_{X,\rho}
  \ll X^{1/2}\sigma(\Omega_X)
  +X^{1/2}\rho\sigma(\Omega_X)^{1/2}
  +X^\vep\sigma(\Omega_X)^{1/2}\rho^{-1/2-\vep}.
\end{align*}
This proves~\eqref{A bound smooth}.

\subsection{Averages over short intervals}\label{s:Pom}

It remains to estimate the smoothed average over $[X,X+H]$. From the spherical-harmonic expansion,
\begin{align*}
  A_{X,H,\rho}\ll\frac{X^{1/2}}{H}
  \sum_k h_X(k)^2\cF_{X,H}(k),
\end{align*}
where
\begin{align*}
  \cF_{X,H}(k):=\sum_{j=1}^{2(k-1)}
  \sum_{X\le n\le X+H}\abs{b_{k,j}(n)}^2.
\end{align*}
Following the proof of Lemma~\ref{lem:Fx est}, choose a smooth function $\psi$ supported on $[-1/2,3/2]$ and identically equal to $1$ on $[0,1]$. Then
\begin{align*}
  \cF_{X,H}(k)
  \ll\sum_{j=1}^{2(k-1)}\sum_{n=1}^\infty
  \abs{b_{k,j}(n)}^2
  \psi\left(\frac{n-X}{H}\right).
\end{align*}
Let $\wh{\psi_{X/H}}$ denote the Mellin transform of the function
\begin{align*}
  z\longmapsto\psi\left(z-\frac{X}{H}\right).
\end{align*}
Mellin inversion gives
\begin{align*}
  \cF_{X,H}(k)
  \ll\sum_{j=1}^{2(k-1)}\|\theta_{k,j}\|_2^2
  \abs{\frac{1}{2\pi i}\int_{c-i\infty}^{c+i\infty}
  \wh{\psi_{X/H}}(s)H^sL_{k,j}(s)\rd s}.
\end{align*}
Shifting the contour to $\Re(s)=1/2$ and collecting the residue at $s=1$, we obtain
\begin{align*}
  \cF_{X,H}(k)
  &\ll\sum_{j=1}^{2(k-1)}\|\theta_{k,j}\|_2^2
  \left(
  \Res(L_{k,j},1)H
  +\abs{\frac{1}{2\pi i}\int_{(1/2)}
  L_{k,j}(s)H^s\wh{\psi_{X/H}}(s)\rd s}
  \right)\\
  &\ll\sum_{j=1}^{2(k-1)}\|\theta_{k,j}\|_2^2
  \left(
  \frac{H}{(4\pi)^{-k}\Gamma(k)}
  +\abs{\frac{1}{2\pi i}\int_{(1/2)}
  L_{k,j}(s)H^s\wh{\psi_{X/H}}(s)\rd s}
  \right).
\end{align*}
As before, the smoothness of $\psi$ permits us to truncate the integral to $\abs{\Im(s)}\le X^\vep$, with negligible error. Thus
\begin{align*}
  \cF_{X,H}(k)
  \ll\sum_{j=1}^{2(k-1)}\|\theta_{k,j}\|_2^2
  \left(
  \frac{H}{(4\pi)^{-k}\Gamma(k)}
  +H^{1/2}\int_{-X^\vep}^{X^\vep}
  \abs{L_{k,j}\left(\frac12+it\right)}
  \abs{\wh{\psi_{X/H}}\left(\frac12+it\right)}\rd t
  \right).
\end{align*}
By Lemma~\ref{lem:crit line} and the estimate
\begin{align*}
  \abs{\wh{\psi_{X/H}}\left(\frac12+it\right)}
  \ll\left(\frac{X}{H}\right)^{1/2},
\end{align*}
we have
\begin{align*}
  \cF_{X,H}(k)
  \ll\sum_{j=1}^{2(k-1)}\|\theta_{k,j}\|_2^2
  \left(
  \frac{H}{(4\pi)^{-k}\Gamma(k)}
  +X^{1/2+\vep}\frac{(4\pi)^k}{\Gamma(k-1)}k^{1/2+\vep}
  \right).
\end{align*}
Applying Lemma~\ref{lem:L2}, we conclude that
\begin{align*}
  \cF_{X,H}(k)
  \ll\sum_{j=1}^{2(k-1)}\|\theta_{k,j}\|_2^2
  \frac{H}{(4\pi)^{-k}\Gamma(k)}
  +X^{1/2+\vep}k^{5/2+\vep}.
\end{align*}
Substitution into the estimate for $A_{X,H,\rho}$ gives
\begin{align*}
  A_{X,H,\rho}
  \ll\frac{X^{1/2}}{H}\sum_k h_X(k)^2
  \left(
  \sum_{j=1}^{2(k-1)}\|\theta_{k,j}\|_2^2
  \frac{H}{(4\pi)^{-k}\Gamma(k)}
  +X^{1/2+\vep}k^{5/2+\vep}
  \right).
\end{align*}
The first term is handled exactly as in the proof of~\eqref{A bound smooth}, yielding
\begin{align*}
  A_{X,H,\rho}
  \ll X^{1/2}\sigma(\Omega_X)
  +X^{1/2}\rho\sigma(\Omega_X)^{1/2}
  +\frac{X^{1+\vep}}{H}\sum_k h_X(k)^2k^{5/2+\vep}.
\end{align*}
Applying the estimate for the final sum established above, we obtain
\begin{align*}
  A_{X,H,\rho}
  \ll X^{1/2}\sigma(\Omega_X)
  +X^{1/2}\rho\sigma(\Omega_X)^{1/2}
  +\frac{X^{1+\vep}}{H}\sigma(\Omega_X)^{1/2}
  \rho^{-1/2-\vep}.
\end{align*}
This proves~\eqref{AH bound} and completes the proof of Theorem~\ref{thm:smooth}.

\section*{Acknowledgments}

I am grateful to Valentin Blomer, Claire Burrin, Peter Humphries, Philippe Michel, and Zeev Rudnick for their insightful comments on earlier drafts.

An earlier draft of this paper was run through ChatGPT 5 pro to polish the language and proof read. 

\small
\bibliographystyle{alpha}
\bibliography{biblio}

\end{document}